\newtheorem{theorem}{Theorem}[section]
\newtheorem{lemma}[theorem]{Lemma}
\theoremstyle{definition}
\newtheorem{definition}[theorem]{Definition}
\newtheorem{example}[theorem]{Example}
\newtheorem{corollary}[theorem]{Corollary}
\theoremstyle{remark}
\newtheorem{remark}[theorem]{Remark}
\numberwithin{equation}{section}
\DeclareMathOperator{\Ima}{Im}
\providecommand{\keywords}[1]{\textbf{\textit{Keywords:}} #1}
\providecommand{\subjclass}[1]{\textbf{\textit{MSC2020:}} #1}
\begin{document}

\nocite{*} 

\title{ Direct sum Decomposition of Spaces of Periodic Functions: $$  \mathbb{P}_n =  \bigoplus \limits_{d|n} \ker(\Phi_d(E))   $$}

\author{Hailu Bikila Yadeta \\ email: \href{mailto:haybik@gmail.com}{haybik@gmail.com} }
  \affil{Salale University, College of Natural Sciences, Department of Mathematics\\ Fiche, Oromia, Ethiopia}
\date{\today}
\maketitle

\begin{abstract}
\noindent It was proved that the space $ \mathbb{P}_p $ of all periodic function of fundamental period $ p  $  is a direct sum of the space $ \mathbb{P}_{p/2} $ of all periodic functions of fundamental period $ p/2 $ and the space $ \mathbb{AP}_{p/2} $  of all antiperiodic functions of fundamental antiperiod $ p/2 $. In this paper, we study some connections between periodic functions, cyclotomic polynomials, roots of unity, circulant matrices, and some classes of difference equations. In particular, we  state and prove the sufficient condition for the existence of periodic solutions of integer period or arbitrary period of some difference equation. We also show that  the space $ \mathbb{P}_n $ of all periodic functions of integer period $n$ can be decomposed as the direct sum of operators' kernels $\ker (\Phi_d(E)) $, where $\Phi_d(E),\, 1 \leq d \leq n, d|n $  are the cyclotomic polynomials of the shift operator $E$. We state and prove important theorems, state and prove the necessary and sufficient conditions for a linear difference equation with constant coefficients to have periodic solutions.
\end{abstract}

\noindent\keywords{periodic function, antiperiodic function, direct sum, decomposition, difference equation, cyclotomic polynomial, root of unity, circulant matrix, kernel }\\
\subjclass{Primary  11R60, 11C20, 39A06, 39A23 }\\
\subjclass{Secondary  47B38, 47B39, 47B92}

\section{Introduction and preliminaries}
\subsection{The shift operator and periodicity }

For $ h \in \mathbb{R} $, we define the shift operator $ E ^h $ and the identity operator $ I $ as
$$ E ^h y(x) := y(x+h),\quad  I y(x) := y(x) .$$
 For $ h = 1 $, we write $ E^h $ only as $ E $ than $ E^{1}$. We agree  that $ E^0 = I $.
We define the forward difference operator $\Delta $ and the back ward difference operators $ \nabla $ as follows
$$ \Delta y(x):=  (E-I)y(x)= y(x+1) -y(x),\quad  \nabla y(x)= (I-E^{-1})y(x)= y(x)-y(x-1). $$
\begin{definition}
  A function $ f$ is said to be $p$-periodic if there exists a $ p >0 $ such that $ f(x)= f(x+ p),\,  x \in \mathbb{R}$. The least such $p$ is called the \emph{period} of $ f $. In terms of shift operator we write this as
  $$ E^pf(x) = f(x) .$$
\end{definition}

\begin{definition}\cite{GN}, \cite{JM}
  A function $ f$ is said to be $p$-antiperiodic if there exists a $ p >0 $ such that $ f(x+ p)= -f(x) ,\,  x \in \mathbb{R}$. The least such $p$ is called the \emph{ antiperiod} of $ f $. In terms of shift operator we write this as
  $$ E^pf(x) = -f(x). $$
  \end{definition}

\begin{example}
  The functions $ f_n(x)= \cos 2n \pi x , \, n\in \mathbb{N} $ are 1-periodic. The functions $g_n(x)= \cos (2n+1) \pi x ,\, n \in \mathbb{N} $ are 1-antiperiodic. The function $f(x)=x-\lfloor x \rfloor$, where $\lfloor x \rfloor$ denotes the greatest integer not greater than $x$, is a 1-periodic function.
\end{example}

\begin{remark}
  Every $p$-antiperiodic function is $2p$-periodic. However not every $2p$-periodic functions is $p$-antiperiodic function. Further properties of $p$-antiperiodic function are available in literatures. For example, finite linear combinations, or  convergent infinite series each of  whose terms are p-periodic (p-antiperiodic) function is a p-periodic(p-antiperiodic) function. For example
  $$ f(x)=\sum_{n =1}^{\infty}\frac{\cos (2n+1) x }{n^2}$$ is  $\pi $-antiperiodic function defined by a uniformly convergent  series each of its terms is $\pi $-antiperiodic. See \cite{GN}.
\end{remark}
Let $ \mathbb{P}_p $ denote the space of all real valued  periodic functions with period equal to $p$
\begin{equation}\label{eq:pperiodic}
  \mathbb{P}_p := \{ f: \mathbb{R}\rightarrow \mathbb{R},\quad f(x+p) =f(x) \}.
\end{equation}
Let $ \mathbb{AP}_p $ denote the space of all real valued  antiperiodic functions with antiperiod equal to $p$.
\begin{equation}\label{eq:pantiperiodic}
  \mathbb{AP}_p := \{ f: \mathbb{R}\rightarrow \mathbb{R },\quad f(x+p) = - f(x) \}.
\end{equation}
\begin{remark}
  The constant function $ f(x)=0 $ is the only function that is both periodic and antiperiodic with any period and antiperiod.
\end{remark}
Let $\mathcal{F} $ represent the space of all real-valued function of real domain. That is
\begin{equation}\label{eq:realvaluedfunction}
\mathcal{F}:= \{ f: \mathbb{R}\rightarrow \mathbb{R }\}.
\end{equation}
The spaces $ \mathbb{P}_p $, and $ \mathbb{AP}_p $  form subspaces of $ \mathcal{F }$. In the authors previous work, a periodic function of period $p$ can be written as  the sum a periodic function period $p/2$ and an antiperiodic function of antiperiod $p/2$ in a unique way. In fact if $f$ is a periodic function of period $p$, then
$$  f = g+h, $$
where
 $$ g= \frac{E^{p/2}f +f}{2},\quad   g= \frac{E^{p/2}f -f}{2}  .  $$

Hence we can perform successive decomposition of the periodic spaces  with the following pattern
\begin{align*}
  \mathbb{P}_p & = \mathbb{AP}_{p/2} \oplus  \mathbb{P}_{p/2} \\
   & =\mathbb{AP}_{p/2}\oplus \mathbb{AP}_{p/4}\oplus \mathbb{P}_{p/4} \\
   & =\mathbb{AP}_{p/2}\oplus \mathbb{AP}_{p/4}\oplus \mathbb{AP}_{p/8} \oplus \mathbb{P}_{p/8}\\
   & =\mathbb{AP}_{p/2}\oplus \mathbb{AP}_{p/4}\oplus \mathbb{AP}_{p/8} \oplus \mathbb{AP}_{p/16}  \oplus \mathbb{P}_{p/16}\\
   & ...........................................................................................\\
   &............................................................................................
\end{align*}
 As such we may decompose a space $\mathbb{P}_p$  of all periodic functions of fundamental period $p$  into an infinite direct sums of spaces of antiperiodic functions of distinct antiperiods. In this paper we consider spaces of periodic functions with some integer period $n$. We consider a finite direct sums of periodic subspaces with periods $d$   that are divisors of $n$. This has some connections to the factorization of the polynomial $x^n-1$ into cyclotomic polynomials.
\subsection{Cyclotomic polynomials}
\begin{definition}
  Let $n \in \mathbb{N} $. An $n$-th cyclotomic polynomial $ \Phi_n$ is an irreducible polynomial with integer coefficients and is a divisor of $x^n-1 $ and is not a divisor of $x^k-1$ for any $k < n$.
  \begin{equation}\label{eq:phin}
  \Phi_n(x)=  \prod_{\substack{1 \leq k \leq n  \\ \text{gcd} (k,n)=1}}(x-e ^{\frac{2 \pi i k }{n}})
\end{equation}
See \cite{TA}, \cite{CMW} \cite{WE}, \cite{WP} \cite{JS}.
\end{definition}
The degree of the $n$th cyclotomic polynomial is $\phi(n) $, where $\phi$ is the \emph{Euler's totient function} which is the count of positive integers that are less than $n$ and that relatively prime to $n$. Here is the list of the first twelve cyclotomic polynomials.
\begin{align*}
   & \Phi_1(x)= x-1 && \Phi_2(x)= x+1 \\
   & \Phi_3(x)= x^2 +x+1 && \Phi_4(x)= x^2 + 1 \\
   & \Phi_5(x)= x^4 + x^3 + x^2 +  x + 1 && \Phi_6(x)= x^2 - x + 1 \\
   & \Phi_7(x)= x^6 + x^5+ x^4 + x^3 + x^2 +  x + 1   && \Phi_8(x)=  x^4  + 1    \\
   & \Phi_9(x)= x^6 + x^3  + 1 &&   \Phi_{10}(x)= x^4 - x^3 + x^2 -  x + 1  \\
   & \Phi_{11}(x)= x^{10}+x^{9}+x^{8}+ x^{7}+ x^6 + x^5+ x^4 + x^3 + x^2 +  x + 1 && \Phi_{12}(x)=  x^4 - x^2  + 1
\end{align*}
A more extended lists of cyclotomic polynomials are available in some literatures. See, for example, \cite{WE}, \cite{WP}.
\subsubsection{The $n$-th roots of unity}
\begin{definition}
  Let $ n \in \mathbb{N} $. An $n$th root of unity is a complex number $x$ satisfying the equation
  $$ x^n = 1 .$$
  An $n$th root of unity is called \emph{primitive }if it is not an $m$th root of unity for some $m$ such that $1\leq m < n $. For example, $e^{i2\pi/3 }$ is primitive $3$rd root of unity.
\end{definition}
An important relation linking cyclotomic polynomials and roots of unity is
\begin{equation}\label{eq:factorizationofxtonminus1}
   x^n -1= \prod_{d|n} \Phi_{d}(x).
\end{equation}
So any  $n$-th root of unity is a is also a root of some cyclotomic polynomial $\Phi_d, d|n$.
\subsection{The shift operators and difference equations}
\begin{definition}
  A linear difference equation of order $n$ is written as
\begin{equation}\label{eq:generallinear}
  p_0(x)y(x+n)+p_1(x)y(x+n-1)+...+p_n(x)y(x)=r(x)
  \end{equation}
where $p_i,\, i=0,1,...,n $ and $r(x)$ are defined on some closed interval $[a,b]$ of the $x$-axis and $ p_0(x)p_n(x) \neq 0 $ on $[a,b]$ .
\end{definition}
In shift operator form
$$  L(x,E)y(x)=r(x),$$
where
$$   L(x,E):= p_0(x) E ^n  + p_1(x)E ^{n-1}+...+p_n(x) I $$
Equation (\ref{eq:generallinear}) is said to be nonhomogeneous if $r(x)\neq 0 $, and homogeneous otherwise. We are interested in linear difference equation with constant coefficients and that are homogenous
\begin{equation}\label{eq:polynomialdifferenceequation}
 P(E)y(x):= a_n y(x+n)+ a_{n-1}y(x+n-1)+...+a_0y(x)=0, \quad a_i \in \mathbb{R},\, i=0,2,3,...,n,\, a_0 a_n \neq 0,
  \end{equation}
where
\begin{equation}\label{eq:polynomialinE}
   P(E)=a_nE^n +a_{n-1}E^{n-1}+...+a_1E+a_0+I
\end{equation}
 is a polynomial function in the shift operator $E$. The shift operator $E$ acts on an exponential function $y(x) = m ^x $ as

\begin{equation}\label{Shiftonexponential}
Ey(x)=E m^x = m^{x+1}=m m^x =my(x).
\end{equation}
The polynomial $P$  in shift operator $E$ acts on an exponential function $y(x) = m ^x $ as
\begin{equation}\label{polynomialofshiftonexponential}
P(E)y(x)=P(E) m^x =P(m) y(x).
\end{equation}

For arbitrary 1-periodic function $\mu $, if we let $y(x)=\mu (x) m^x $, then
    $$Ey(x)=E \mu (x) m^x = \mu (x+1) m^{x+1}= \mu (x) m m^x = my(x).$$

 More generally, for an operator $P(E)$ of a polynomial in shift operator $E$, we have
   $$P(E) \mu(x) r^x = P(r) \mu(x) r^x. $$
   It follows that, for a polynomial $p$ of degree $n$ with distinct roots $m_1, m_2,...,m_n$, the general solution of the difference equation
   $$ P(E)y(x)=0 $$
   is given by
   \begin{equation}\label{eq:solutionform}
     y(x)=\sum_{j=1}^{n} \mu _j(x) m_j^x,
   \end{equation}
 where $  \mu(x)= \lambda $ are arbitrary 1-periodic functions. General theories of linear difference equations, including  the general solution, linear independence and Casoratian determinant etc. are available in different textbooks. For example, see \cite{CHR} \cite{KM} \cite{LB} \cite{MT}. The main purpose of the discussion of difference equations here is to study the connections with periodicity, cyclotomic polynomials, circulant matrices. We also establish a direct sum decomposition of spaces of periodic functions of integer period. consider the difference equation
 \begin{equation}\label{eq:periodicequations}
   y(x+n) - y(x)=0,
 \end{equation}
whose characteristic equation is given by
\begin{equation}\label{eq:charactersticeq}
 \lambda ^n-1=0
\end{equation}
From (\ref{eq:factorizationofxtonminus1})  we get the characteristic roots
 $$ \lambda^n -1= 0 \Leftrightarrow \prod_{d|n} \Phi_{d}(\lambda)=0 \Leftrightarrow \Phi_{d}(\lambda)=0 \quad \text{for some}\quad d|n $$
All the solution of the difference equation  (\ref{eq:periodicequations}) are $n$-periodic.  However some solutions  of the difference equation may have fundamental period that is less than $n$.
 \begin{definition}
   A difference equation $P(E)y(x)=0 $ is termed as \emph{cyclotomic difference equation} if $P$ is a polynomial which is a product of distinct cyclotomic polynomials.
 \end{definition}


 \section{Main Results}
 \subsection{Sufficient condition for existence of periodic solutions, periodic decompositions, circulant matrices, and periodic solutions }

   \begin{theorem}[\textbf{Sufficient condition for existence of a  periodic solution of integer period}]
    Consider the difference equation given in (\ref{eq:polynomialdifferenceequation}).  Let $\lambda \in \mathbb{C} $ be  a root of $P$,  that is also a root of some cyclotomic polynomial. Then the difference equation has a periodic solution of integer period.
   \end{theorem}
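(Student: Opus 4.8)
\medskip
\noindent\textbf{Proof plan.} The plan is to manufacture an explicit solution of $P(E)y=0$ from the exponential $\lambda^{x}$ and to read off an integer period from the fact that $\lambda$ is a root of unity; no heavy machinery is required. First I would unpack the hypothesis: since $\lambda$ is a root of some cyclotomic polynomial $\Phi_{d}$, and $\Phi_{d}$ is by definition a divisor of $x^{d}-1$, we get $\lambda^{d}-1=0$, i.e.\ $\lambda^{d}=1$ for this positive integer $d$; in particular $|\lambda|=1$, and (although we will not need the sharp form) $\lambda$ is in fact a \emph{primitive} $d$-th root of unity.

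Next I would exhibit the solution. Writing $\lambda=e^{i\theta}$, set $y_{0}(x)=\lambda^{x}:=e^{i\theta x}$. By the action of a polynomial of the shift operator on an exponential, equation (\ref{polynomialofshiftonexponential}) applied with $m=\lambda$, we have $P(E)y_{0}(x)=P(\lambda)\,y_{0}(x)=0$ because $\lambda$ is a root of $P$, so $y_{0}$ solves the difference equation; moreover $y_{0}(x+d)=\lambda^{x+d}=\lambda^{d}\lambda^{x}=\lambda^{x}=y_{0}(x)$, so $y_{0}$ has integer period $d$. To land inside the space $\mathcal{F}$ of real-valued functions I would pass to the real part: since $P$ has real coefficients, $P(\bar\lambda)=\overline{P(\lambda)}=0$, so $\bar\lambda$ is a root of $P$ as well, and because $E$ commutes with conjugation the function
$$ y(x)=\operatorname{Re}\bigl(\lambda^{x}\bigr)=\tfrac12\bigl(\lambda^{x}+\bar\lambda^{x}\bigr)=\cos(\theta x) $$
satisfies $P(E)y=\tfrac12\bigl(P(\lambda)\lambda^{x}+P(\bar\lambda)\bar\lambda^{x}\bigr)=0$, is real-valued, is not identically zero (its value at $x=0$ is $1$), and still obeys $y(x+d)=y(x)$. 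Hence $y\in\mathbb{P}_{d}$ with $d\in\mathbb{N}$, which is the claim.

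I do not anticipate a genuine obstacle: the heart of the argument is merely the eigen-like identity $P(E)\lambda^{x}=P(\lambda)\lambda^{x}$ together with $\lambda^{d}=1$. The only points that deserve care are (i) extracting a genuinely real and nontrivial solution rather than the formal complex exponential — handled above by taking real parts and using that $P$ has real coefficients — and (ii) if one reads ``period'' as the \emph{fundamental} period, then invoking that $\lambda$ is a primitive $d$-th root of unity to conclude that $d$ is the least integer period of $y_{0}$ (and likewise the least period of $\cos(\theta x)$ when $d\ge 2$, the case $d=1$ giving a nonzero constant solution).
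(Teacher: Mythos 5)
Your proposal follows essentially the same route as the paper: apply the identity $P(E)\lambda^{x}=P(\lambda)\lambda^{x}$ to see that $\lambda^{x}$ solves the equation, then use $\lambda^{d}=1$ (coming from $\Phi_{d}(\lambda)=0$) to read off the integer period $d$. Your extra step of taking real parts, using that $P$ has real coefficients so $\bar\lambda$ is also a root, is a small refinement the paper omits (it works directly with the complex exponential), but it does not change the argument.
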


   \begin{proof}
     Let $P(\lambda)=0 $, and $ \Phi_n(\lambda)=0 $ for some $n \in \mathbb{N }$. Then by (\ref{polynomialofshiftonexponential}), $P(E) \lambda ^x =P(\lambda  )\lambda ^x =0 $. Therefore $y(x)= \lambda ^x $ is a solution of the difference equation (\ref{eq:polynomialdifferenceequation}). Since $ \Phi_n(\lambda)=0 $, by (\ref{eq:phin}), $\lambda= e ^{\frac{2 \pi k }{n}}$ for some $1 \leq k \leq n $, gcd$(k,n)=1$.
     $$ y(x+n)= \lambda^{x+n}= e ^{\frac{2 \pi k x}{n}} e^{2 \pi k i} = e ^{\frac{2 \pi k x}{n}}=y(x) .     $$
   Therefore,  $y(x)=\lambda ^x = e ^{\frac{2 \pi k x}{n}} $ is a $n$-periodic function.
   \end{proof}

   \begin{theorem}
     Let $P$ be a nonconstant polynomial. Let $y$ be a nontrivial periodic solution, of integer period, of the difference  equation $P(E)y(x)=0 $. Then there exists a $\lambda \in \mathbb{C} $ which is a root of some cyclotomic polynomial $\Phi_n $ and a root  of the polynomial $P$ as well.
   \end{theorem}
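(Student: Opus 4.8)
The plan is to turn the hypothesis ``$y$ has integer period'' into a second polynomial identity killing $y$: if the (fundamental) period is the integer $n$, then $E^{n}y=y$, i.e. $(E^{n}-I)y=0$, while the hypothesis already gives $P(E)y=0$. So $y$ lies in the kernel of both $P(E)$ and $E^{n}-I$. Since $x\mapsto E$ extends to a unital ring homomorphism from $\mathbb{R}[x]$ into the commutative algebra of shift operators on $\mathcal{F}$ (as in (\ref{polynomialofshiftonexponential})), every polynomial identity relating $P(x)$ and $x^{n}-1$ transfers verbatim to the corresponding operators acting on $y$. The whole statement then reduces to showing that $P(x)$ and $x^{n}-1$ share a common root.

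The key step is a gcd/B\'ezout argument. Let $g(x)=\gcd\bigl(P(x),\,x^{n}-1\bigr)$, computed in $\mathbb{R}[x]$. Suppose toward a contradiction that $g$ is a nonzero constant, i.e. $P$ and $x^{n}-1$ are coprime. Then there are $a(x),b(x)\in\mathbb{R}[x]$ with $a(x)P(x)+b(x)(x^{n}-1)=1$; applying the corresponding operator identity to $y$ yields
\[
y \;=\; a(E)P(E)y \;+\; b(E)(E^{n}-I)y \;=\; a(E)\cdot 0 \;+\; b(E)\cdot 0 \;=\; 0,
\]
contradicting that $y$ is nontrivial. (One may phrase this more structurally: $\{q\in\mathbb{R}[x]:q(E)y=0\}$ is an ideal containing $P$ and $x^{n}-1$ but not $1$, hence a proper nonzero ideal of the principal ideal domain $\mathbb{R}[x]$, generated by a nonconstant monic $m_{y}$ dividing both $P$ and $x^{n}-1$.) Hence $g$ is nonconstant.

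It remains to harvest a root. Being a nonconstant polynomial, $g$ has some root $\lambda\in\mathbb{C}$, and $g\mid P$, $g\mid (x^{n}-1)$ give $P(\lambda)=0$ and $\lambda^{n}=1$. By the factorization $x^{n}-1=\prod_{d\mid n}\Phi_{d}(x)$ recalled in (\ref{eq:factorizationofxtonminus1}), the number $\lambda$ is a root of $\Phi_{d}$ for some divisor $d$ of $n$. Thus $\lambda$ is simultaneously a root of a cyclotomic polynomial and of $P$, which is exactly the assertion.

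I do not expect a genuine obstacle here: the argument is the standard principal-ideal-domain/B\'ezout trick, and the only points needing a careful word are (i) the justification that the substitution $x\mapsto E$ is a ring homomorphism so the polynomial identity legitimately transfers to operators on $y$, and (ii) the remark that coprimality of $P$ and $x^{n}-1$ — hence whether $g$ is constant — is the same computed over $\mathbb{R}$ or over $\mathbb{C}$, so that ``nonconstant $g$ has a complex root'' is meaningful. Note also that one need not identify the fundamental period of $\lambda^{x}$ nor which particular $\Phi_{d}$ occurs; mere existence of such a $\lambda$ suffices.
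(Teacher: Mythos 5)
Your proof is correct, and it takes a genuinely different route from the paper. The paper argues through the solution theory of linear difference equations: it assumes (``without loss of generality'') that $P$ has distinct roots, writes the general solution as $\sum_r a_r(x) r^x$ with arbitrary $1$-periodic coefficients, writes the given $n$-periodic solution as $\sum_j b_j(x)\omega_j^x$ over $n$-th roots of unity, and then invokes linear independence of exponential solutions to conclude that some root $r$ of $P$ must coincide with some $\omega_j$, hence be a root of some $\Phi_d$ with $d\mid n$. Your argument instead observes that $y$ lies in $\ker P(E)\cap\ker(E^n-I)$ and applies the B\'ezout identity in $\mathbb{R}[x]$: if $P$ and $x^n-1$ were coprime, $a(x)P(x)+b(x)(x^n-1)=1$ would transfer through the substitution homomorphism $x\mapsto E$ to force $y=0$, contradicting nontriviality; hence $\gcd(P,x^n-1)$ is nonconstant and any of its complex roots is a common root of $P$ and of some $\Phi_d$, $d\mid n$, by (\ref{eq:factorizationofxtonminus1}). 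What your approach buys is rigor and economy: it avoids the reduction to distinct roots (which is not really ``without loss of generality,'' since repeated roots enlarge the solution space with terms like $\mu(x)x^k m^x$) and avoids the somewhat informal linear-independence matching step; it also needs no description of the general solution at all, only that $q\mapsto q(E)$ respects polynomial addition and multiplication. What the paper's approach buys is contact with the classical representation of periodic solutions as combinations of root-of-unity exponentials, which is in the spirit of the rest of the article, but as a proof it is sketchier than yours. Your two flagged points (the homomorphism property of $q\mapsto q(E)$, and invariance of the gcd under field extension $\mathbb{R}\subset\mathbb{C}$) are exactly the right ones to state, and both are standard.
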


   \begin{proof}
   With out loss generality let $P $ has distinct roots, in which case the general solution of the difference equation is of the form
   \begin{equation}\label{eq:generalsolutionform1}
      y(x) = a_r(x) r^x,
   \end{equation}
where the summation is over the distinct roots $r$ of the polynomial $P$ and $ a_r$ are arbitrary $1$-periodic  functions. On the other hand if $y_0$ is  a periodic solution, of integer period say $n$, of the difference equation $P(E)y(x)=0 $. Then
\begin{equation}\label{eq:cyclotomicsolution}
  y_0(x)= \sum_{j=0}^{n-1}b_j(x)\omega_j^x
\end{equation}
for some periodic functions $b_j$ and distinct $n$-th roots of unity $\omega_j,\,j=0,1,...,(n-1)$. Since $y_0$  given in (\ref{eq:cyclotomicsolution}) is derived from the general solution given in (\ref{eq:generalsolutionform1}) by the consideration of linear independence of solutions corresponding to distinct roots, we have for some root $r$ of $P$ and for some root of $ \omega_j $, we have $r= \omega_j $. However $\omega_j $ is a root of some cyclotomic polynomial $\Phi_d $, where $d|n$. This proves the proof of the theorem.
   \end{proof}
\begin{theorem}
  Any solution of a cyclotomic difference equation $P(E)y(x)=0 $ is periodic with some inter period.
\end{theorem}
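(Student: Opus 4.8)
The plan is to reduce the statement to the structural description of solutions recorded in \eqref{eq:solutionform}, together with the fact that every root of a cyclotomic polynomial is a root of unity. First I would write $P=\prod_{d\in S}\Phi_d$ for a finite nonempty set $S\subset\mathbb{N}$ of pairwise distinct indices, as permitted by the definition of a cyclotomic difference equation. Since distinct cyclotomic polynomials are pairwise coprime — this follows from \eqref{eq:factorizationofxtonminus1}, because $\Phi_d$ divides $x^{d}-1$ but divides no $x^{k}-1$ with $k<d$ — and each $\Phi_d$ is squarefree by \eqref{eq:phin}, the polynomial $P$ has $\deg P=\sum_{d\in S}\phi(d)$ pairwise distinct roots, and by \eqref{eq:phin} each such root is a primitive $d$-th root of unity for some $d\in S$. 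In particular the hypothesis of the general-solution formula \eqref{eq:solutionform} is satisfied, so every solution of $P(E)y(x)=0$ has the form $y(x)=\sum_{j}\mu_j(x)\,m_j^{x}$, where the sum runs over the distinct roots $m_j$ of $P$ and the $\mu_j$ are arbitrary $1$-periodic functions (with $m_j^{x}$ read as $e^{2\pi i k_j x/d_j}$ when $m_j=e^{2\pi i k_j/d_j}$, so that $E m_j^{x}=m_j m_j^{x}$ as in \eqref{Shiftonexponential}).

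Next I would set $n:=\operatorname{lcm}\{\,d:d\in S\,\}$, which is an integer. Every root $m_j$ of $P$ is a primitive $d_j$-th root of unity with $d_j\mid n$, hence $m_j^{n}=1$; and every $\mu_j$, being $1$-periodic, satisfies $\mu_j(x+n)=\mu_j(x)$. Therefore
\[
E^{n}y(x)=y(x+n)=\sum_{j}\mu_j(x+n)\,m_j^{x+n}=\sum_{j}\mu_j(x)\,m_j^{x}\,m_j^{n}=\sum_{j}\mu_j(x)\,m_j^{x}=y(x),
\]
so $y$ is $n$-periodic with $n\in\mathbb{N}$, which is exactly the assertion. (The fundamental period of $y$ is then a divisor of $n$, though this is not needed.)

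I expect no serious obstacle: once the right integer $n=\operatorname{lcm}(S)$ is identified, the periodicity computation is immediate. The only point requiring care is the appeal to \eqref{eq:solutionform}, which presupposes that $P$ has simple roots; I would therefore make the coprimality and squarefreeness of the cyclotomic factors explicit, as above, rather than take them for granted. A secondary, cosmetic matter is that the $m_j$, and hence $y$, may be complex; if one insists on real-valued solutions one groups each root $m_j$ with its conjugate $\overline{m_j}$ (also a root of the same $\Phi_{d_j}$) and replaces the corresponding pair of terms by $\rho_j(x)\cos(2\pi k_j x/d_j)+\sigma_j(x)\sin(2\pi k_j x/d_j)$ with $\rho_j,\sigma_j$ real and $1$-periodic. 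This alters nothing in the period computation, since $E^{n}$ fixes each of these real building blocks as well.
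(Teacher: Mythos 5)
Your proposal is correct, and in fact the paper states this theorem with no proof at all, so you are supplying an argument the paper omits rather than paralleling one. Your route --- distinct cyclotomic factors give simple roots, each root is a primitive $d$-th root of unity for $d\in S$, invoke the general-solution form \eqref{eq:solutionform}, and check $E^{n}y=y$ with $n=\operatorname{lcm}(S)$ --- is sound within the paper's framework, and you are right to flag the two delicate points (simplicity of the roots, and the interpretation $m_j^{x}=e^{2\pi i k_j x/d_j}$). The one structural weakness is that the whole proof rests on \eqref{eq:solutionform}, which the paper itself only asserts for continuous-argument difference equations and does not prove; if you want an argument that is self-contained relative to the paper's actual results, there is a shorter operator-theoretic route. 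Since every $d\in S$ divides $n=\operatorname{lcm}(S)$, the factorization \eqref{eq:factorizationofxtonminus1} shows that $P(x)=\prod_{d\in S}\Phi_d(x)$ divides $x^{n}-1$, say $x^{n}-1=Q(x)P(x)$ with $Q$ a polynomial; hence for any solution $y$,
\begin{equation*}
(E^{n}-I)y = Q(E)\,P(E)\,y = 0,
\end{equation*}
so $y\in\mathbb{P}_n$ directly, with no appeal to the general-solution formula, no case analysis on multiplicities, and no need to discuss real versus complex solutions. Your lcm identification of the period is exactly the right idea in either version; the divisibility argument simply packages it so that the only inputs are \eqref{eq:factorizationofxtonminus1} and the commutativity of polynomials in $E$, both of which the paper already has on record.
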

\begin{remark}
     If $\lambda $ is a root of some cyclotomic polynomial then $ |\lambda|=1 $. However not all $\lambda \in \mathbb{C}, |\lambda|=1  $ is a root of some cyclotomic polynomial. In the next theorem we will see the existence of periodic solution of arbitrary period.
   \end{remark}

   \begin{theorem}[\textbf{Sufficient condition for existence of a  periodic solution of arbitrary period}]
     For some polynomial $P$, the sufficient condition for the difference equation $P(E)y(x)=0$ to have a periodic solution of arbitrary period is that there exists a root $\lambda \in \mathbb{C} $ of $P$ such that $ |\lambda|=1 $.
   \end{theorem}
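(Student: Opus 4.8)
The plan is to manufacture an explicit periodic solution out of the root $\lambda$ lying on the unit circle, in the same spirit as the proof of the sufficient condition for an integer period but without assuming that $\lambda$ is a root of unity. Since $P(\lambda)=0$, relation (\ref{polynomialofshiftonexponential}) gives $P(E)\lambda^{x}=P(\lambda)\lambda^{x}=0$, so the complex-valued function $y(x)=\lambda^{x}$ solves $P(E)y(x)=0$. Because $\abs{\lambda}=1$ we may write $\lambda=e^{i\theta}$ with $\theta=\arg\lambda\in(-\pi,\pi]$, where we take $\lambda^{x}:=e^{i\theta x}$, consistent with the action $E\lambda^{x}=\lambda\lambda^{x}$ used above.

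If $\theta\neq 0$, set $T=2\pi/\abs{\theta}>0$; then
$$y(x+T)=\lambda^{x+T}=\lambda^{x}e^{\,i\theta(2\pi/\abs{\theta})}=\lambda^{x}e^{\pm 2\pi i}=\lambda^{x}=y(x)$$
(with the sign of the exponent matching that of $\theta$), so $y$ is periodic of period $T$; and if $\theta=0$ then $\lambda=1$ and $y\equiv 1$ is constant, hence periodic of every positive period. Thus the period $T=2\pi/\abs{\theta}$ is an arbitrary positive real number, in general not an integer, which is the sense of ``arbitrary period'' here, as opposed to the integer-period conclusions of the preceding theorems. To land inside the real spaces (\ref{eq:pperiodic}) I would then use that $P$ has real coefficients, so that $\overline{P(E)y(x)}=P(E)\overline{y}(x)$ and hence $\overline{y}(x)=\overline{\lambda^{x}}$ is a solution as well; therefore $\operatorname{Re}y(x)=\cos(\theta x)$ and $\operatorname{Im}y(x)=\sin(\theta x)$ are real solutions, at least one not identically zero, any nonzero one having fundamental period $2\pi/\abs{\theta}$ (for $\theta=0$ one takes the constant $\operatorname{Re}y\equiv 1$). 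This yields the required real periodic solution, an element of $\mathbb{P}_{2\pi/\abs{\theta}}$.

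There is no hard computation involved; the delicate points are organizational. One must fix the precise meaning of ``arbitrary period'' and dispose of the degenerate roots $\lambda=1$ (constant solution, periodic of every period) and $\lambda=-1$ (period $2$). I would also record the caveat that this cannot be strengthened to a solution of a freely \emph{prescribed} period by exploiting the $1$-periodic multipliers $\mu_{j}$ of (\ref{eq:solutionform}): the relation $\mu(x+T)=\mu(x)$ fails for non-integer $T$, so $\mu(x)\lambda^{x}$ need not be periodic unless $\mu$ is constant, and the period of the solution constructed above is governed by $\lambda$ alone. Finally, only sufficiency is asserted here; the converse implication would require a separate argument and is genuinely subtle, since the $1$-periodic multipliers are a priori arbitrary, possibly unbounded, functions.
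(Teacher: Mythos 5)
Your proposal is correct and follows essentially the same route as the paper: since $P(\lambda)=0$ and $\abs{\lambda}=1$, write $\lambda=e^{i\theta}$ and observe that $y(x)=\lambda^{x}=e^{i\theta x}$ solves $P(E)y(x)=0$ and is periodic of period $2\pi/\abs{\theta}$. Your additional care (the degenerate case $\theta=0$, using $\abs{\theta}$ rather than $\theta$, and passing to real and imaginary parts to obtain a real-valued solution) only tightens details the paper leaves implicit.
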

   \begin{proof}
     Let $\lambda =e ^{i\theta}$, where $\theta$ is the argument of $\lambda $. Then $ y(x)= \lambda^x= e^{i\theta x} $ is a periodic function of period $ \frac{2 \pi }{\theta} $.
   \end{proof}

\begin{definition}
  Let $M$ a linear map from a vector space $V$ to a vector space $W$. That is written as
  $  M : V \rightarrow W $. Then the kernel $\ker M $ and the image $\Ima M $ of the linear operator are defined as
  $$  \ker M =\{x \in V |\,  Mx =0\}, \quad \Ima M= \{y\in W|\, y=Mx,\,\text{ for some}\, x\in V  \} .$$
\end{definition}

\begin{definition}
Let $L$ and $ M $ be two operators on a vector space $X$. We say that $L$ and $M$  have no nontrivial common factor if  whenever $L$ and $M$ can be factored as $L=L' K $ and $M = M'K$  with common factor $K$, then $ K = \alpha I $, where $\alpha \in \mathbb{R}, \alpha \neq 0 $, and $I$ is the identity operator on $X$. We say that $K$ is the greatest common factor $K=\text{gcd}(L,M)$ if $L'$ and $ M'$ have no nontrivial common factor.
\end{definition}

 \begin{lemma}\label{eq:kernelDecompostion}
 Let $L$ and $ M $ be two operators on a vector space $X$, and that commute with each other and that have no common nontrivial factor. Then we have the following result
  $$\ker (LM)= \ker L + \ker M . $$
  \end{lemma}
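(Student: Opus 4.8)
The plan is to prove the two inclusions $\ker(LM)\supseteq \ker L+\ker M$ and $\ker(LM)\subseteq \ker L+\ker M$ separately, and then note that the sum is in fact direct (though the statement as written only asserts equality of the sum, so the directness can be a remark). The first inclusion is the easy half: if $x\in\ker L$ then, using that $L$ and $M$ commute, $LM x = ML x = M(Lx) = M0 = 0$, so $x\in\ker(LM)$; symmetrically $\ker M\subseteq\ker(LM)$. Since $\ker(LM)$ is a subspace, it contains the sum $\ker L+\ker M$. The reverse inclusion is where the hypothesis of ``no nontrivial common factor'' must be used in an essential way.

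For the hard direction, the natural tool is a Bézout-type identity: since $L$ and $M$ have no nontrivial common factor, I would invoke (or, depending on how self-contained the paper wants to be, establish) that there exist operators $A$ and $B$ in the same commutative algebra generated by the relevant shift/polynomial operators such that $AL + BM = I$. In the setting the paper actually cares about — $L=\Phi_d(E)$ and $M=\Phi_{d'}(E)$ with $d\neq d'$, or more generally $L=P(E)$, $M=Q(E)$ with $\gcd(P,Q)=1$ in $\mathbb{R}[x]$ (or $\mathbb{C}[x]$) — this is just the ordinary Bézout identity for coprime polynomials, lifted by the homomorphism $x\mapsto E$. Granting $AL+BM=I$, take any $x\in\ker(LM)$ and write
$$ x = Ix = ALx + BMx = BMx + ALx. $$
Set $u := BMx$ and $v := ALx$. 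Then $Lu = LBMx = B(LM)x = B0 = 0$ (using commutativity to move $B$ and $L$ past each other, and $LM=ML$), so $u\in\ker L$; likewise $Mv = MALx = A(ML)x = A(LM)x = 0$, so $v\in\ker M$. Hence $x=u+v\in\ker L+\ker M$, which completes the reverse inclusion and the proof of equality.

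The main obstacle — and the step I would be most careful about — is justifying the existence of the Bézout operators $A,B$ at the level of operators rather than polynomials, i.e. making precise the phrase ``no nontrivial common factor'' so that it genuinely yields $AL+BM=I$. If $L$ and $M$ are assumed from the outset to be polynomials in a single operator $E$ (which is the paper's running situation), this is immediate from the Euclidean algorithm in $\mathbb{R}[x]$ together with the evaluation homomorphism; the only thing to check is that $\gcd$ in the operator sense defined just before the lemma matches $\gcd$ in $\mathbb{R}[x]$, and that a nonzero constant polynomial corresponds to $\alpha I$. If instead $L,M$ are allowed to be arbitrary commuting operators, one needs the ``no common factor'' hypothesis to be strong enough to supply such an identity, and I would state that assumption explicitly (or restrict the lemma to the polynomial-in-$E$ case, which is all that is used later). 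Finally, for the directness of the sum one checks that $\ker L\cap\ker M=\{0\}$: if $x$ lies in both, then $x=ALx+BMx=0$. I would include this as a short closing remark since the decomposition theorem for $\mathbb{P}_n$ needs the sum to be direct.
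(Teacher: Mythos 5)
Your proof is correct in the setting the paper actually needs, but it takes a genuinely different route from the paper's own argument. The paper proves the easy inclusion essentially as you do (write $f=f_L+f_M$ and use commutativity), but for the hard inclusion $\ker(LM)\subseteq\ker L+\ker M$ it argues purely set-theoretically: it rewrites $\ker(LM)$ as $\{x\,:\,Mx\in\ker L\}$ and then claims this equals $\ker M\cup(\Ima M\cap\ker L)$, which it then places inside $\ker L+\ker M$. That chain never invokes the coprimality hypothesis at all, and its middle step is not correct as stated: the preimage $M^{-1}(\ker L)$ is not the union of $\ker M$ with $\Ima M\cap\ker L$, since the latter consists of the images $Mx$ rather than the elements $x$ themselves; so the paper's hard inclusion is in effect asserted rather than derived. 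Your B\'ezout-identity argument ($AL+BM=I$, then $x=BMx+ALx$ with $BMx\in\ker L$ and $ALx\in\ker M$, using commutativity) is the standard rigorous proof, it uses the ``no common factor'' hypothesis exactly where it is needed, and it yields for free the directness $\ker L\cap\ker M=\{0\}$, which the paper has to argue separately when it proves the decomposition of $\mathbb{P}_n$ into the kernels $\ker(\Phi_d(E))$. The price, which you correctly flag, is that for abstract commuting operators the stated hypothesis does not by itself produce the operators $A$ and $B$; one must either restrict to $L=P(E)$, $M=Q(E)$ with $\gcd(P,Q)=1$ in $\mathbb{R}[x]$ (or $\mathbb{Q}[x]$), which covers every application in the paper since distinct cyclotomic polynomials are coprime, or else take the identity $AL+BM=I$ as the precise meaning of coprimality. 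So your version is narrower in form but sound, whereas the paper's version is stated more generally but its key step does not hold up as written.
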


%

 \begin{proof}
   Let $ f \in \ker (LM) $. Then
   \begin{align*}
     \ker L M & =\{ x \in X \,|\quad LMx =0 \}\\
       & =\{ x \in X\,| \quad  Mx \in \ker L \}\\
       & = \ker M  \cup  ( \Ima M \cap  \ker L )\\
      &  \subset \ker M \cup \ker L \\
      &  \subset \ker M + \ker L
     \end{align*}
   By commutativity of $L$ and $M $, we have
   $$  \ker L M  \subset \ker L + \ker M  $$
   Let $ f \in \ker L + \ker M $. Then $ f = f_{L} +  f_{M}$, where $ f_{L} \in \ker L, f_{M} \in \ker M $, so that $ L f_{L} =0, M f_{M}=0 $. Now $  ML f = M L f_{L} + LMf_{M}=0 $.
   $$ \ker L + \ker M  \subset  \ker L M .   $$
   Hence the theorem is proved.
 \end{proof}

 \begin{remark}
   In the above theorem we included the assumption that $L$ and  $M $ have no nontrivial common factor. Now consider the case $L=M $ so that it is obvious that
   $$  \ker L \subset \ker L^2 . $$
   In this case there may exist an element $ x \in \ker L^2,\,  x \notin \ker L $. So that
   $$\ker L + \ker L = \ker L \subsetneq \ker L^2 $$
   However the operator  $L$ may be idempotent, $ L^2=L $. In this case $\ker L + \ker L = \ker L =\ker L^2 $. In the case of two commuting operators with nontrivial factors the kernel of the product may not be the kernel of the kernels of the individual factors. See the next example.
 \end{remark}
 \begin{example}
   Consider the following linear second order differential equation
   $$ (D-I)^2 y(x)=y''(x)-2y'(x)+y(x)=0. $$
   Then the general solution of the second order homogenous linear differential equation is
   $$ \ker   (D-I)^2 = \{  c_1 e^x + c_2xe^x,\, c_1,c_2 \in \mathbb{R} \}.   $$
   Where as,
   $$     \ker (D-I)+ \ker (D-I)= \ker (D-I) = \{  c e^x |\quad c \in \mathbb{R} \} .$$
 \end{example}

 \begin{example}
   Consider the one-dimensional wave equation
   \begin{equation}\label{eq:waveequation}
    u_{tt}-c^2 u_{xx}=0 .
   \end{equation}
 The general solution to  equation (\ref{eq:waveequation}) is the kernel of the wave operator $\Box:= \partial^2_t- c^2 \partial^2_x $. The operator can be factored as
   $$ \Box =  \partial^2_t- c^2 \partial^2_x  = (\partial_t - c \partial_x) (\partial_t + c \partial_x)= L_1L_2, $$
   where $ L_1:=\partial_t - c \partial_x $, and  $L_2:=\partial_t + c \partial_x $ are operators that commute on appropriate function space for example, a space  $C^2$ of twice continuously differentiable functions. For necessary and sufficient condition for equality of mixed derivative see literatures, for example, \cite{TA1}.
   The general solution $L_1 u=0$, is $ \ker L_1= \{   f(x+ct)| \quad  f \in C^2 \}$, and the general solution $L_2 u=0 $ is $ \ker L_2= \{   g(x-ct)| \quad g \in C^2 \}$. Consequently, according to Theorem \ref{eq:kernelDecompostion}, we have the general solution of the wave equation (\ref{eq:waveequation})
   $$ u(x,t)= f(x+ct)+g(x-ct), \quad f,g \in C^2. $$
   \end{example}

   \begin{theorem}\label{eq:kerneldecomposition}
   Let $\mathbb{P}_n$ be the space of all periodic function with period equal to $n\in N $, and $E$ is the shift operators' kernels. Then we have the following decomposition of spaces of periodic functions into subspaces as a direct sum
     $$ \mathbb{P}_n =  \bigoplus \limits_{d|n} \ker(\Phi_d(E)). $$
   \end{theorem}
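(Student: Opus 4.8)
The plan is to recognize $\mathbb{P}_n$ as the kernel of a single polynomial operator in $E$, feed a factorization of that operator into Lemma~\ref{eq:kernelDecompostion} to obtain a sum decomposition, and then upgrade the sum to a direct sum via a coprimality (Chinese Remainder) argument. \textbf{Reduction to an operator kernel.} First I would observe that, by the defining relation~\eqref{eq:pperiodic}, $f\in\mathbb{P}_n$ exactly when $E^nf=f$, so $\mathbb{P}_n=\ker(E^n-I)$; and substituting the operator $E$ for the indeterminate $x$ in~\eqref{eq:factorizationofxtonminus1} — legitimate since each $\Phi_d$ has integer coefficients and powers of $E$ commute — yields the operator identity $E^n-I=\prod_{d\mid n}\Phi_d(E)$, whose factors $\Phi_d(E)$ pairwise commute, being polynomials in $E$.

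\textbf{The sum decomposition.} Each $\ker\Phi_d(E)$ lies in $\mathbb{P}_n$: writing $E^n-I=\Phi_d(E)R(E)$ with $R(E)=\prod_{e\mid n,\,e\ne d}\Phi_e(E)$ and using commutativity, any $g$ with $\Phi_d(E)g=0$ satisfies $(E^n-I)g=R(E)\Phi_d(E)g=0$. For the reverse inclusion I would enumerate the divisors of $n$ as $d_1,\dots,d_k$ and induct on $k$: the operators $\Phi_{d_1}(E)$ and $\prod_{i\ge2}\Phi_{d_i}(E)$ commute, and they have no nontrivial common factor because $\Phi_{d_1}$ is irreducible over $\mathbb{Q}$ and is not among the distinct irreducible factors $\Phi_{d_2},\dots,\Phi_{d_k}$ of the second operator, so their polynomial gcd is $1$. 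Lemma~\ref{eq:kernelDecompostion} then gives $\ker(E^n-I)=\ker\Phi_{d_1}(E)+\ker\bigl(\prod_{i\ge2}\Phi_{d_i}(E)\bigr)$, and applying the inductive hypothesis to the second summand produces $\mathbb{P}_n=\sum_{d\mid n}\ker\Phi_d(E)$.

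\textbf{Directness.} Since Lemma~\ref{eq:kernelDecompostion} delivers only a (possibly non-direct) sum, the remaining work is to show that each $\ker\Phi_d(E)$ intersects the sum of the other summands trivially. If $g$ is in that intersection then $\Phi_d(E)g=0$, while writing $g=\sum_{e\ne d}g_e$ with $\Phi_e(E)g_e=0$ shows $\bigl(\prod_{e\mid n,\,e\ne d}\Phi_e(E)\bigr)g=0$. As $\Phi_d$ and $\prod_{e\ne d}\Phi_e$ are coprime in $\mathbb{Q}[x]$, Bézout provides $a,b\in\mathbb{Q}[x]\subseteq\mathbb{R}[x]$ with $a\Phi_d+b\prod_{e\ne d}\Phi_e=1$, and the corresponding operator identity applied to $g$ gives $g=a(E)\Phi_d(E)g+b(E)\bigl(\prod_{e\ne d}\Phi_e(E)\bigr)g=0$. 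Hence the sum is direct, proving $\mathbb{P}_n=\bigoplus_{d\mid n}\ker\Phi_d(E)$. Equivalently, and self-containedly, one may fix a single relation $\sum_{d\mid n}R_d\prod_{e\ne d}\Phi_e=1$ in $\mathbb{Q}[x]$, set $\pi_d:=R_d(E)\prod_{e\ne d}\Phi_e(E)$, and check on $\mathbb{P}_n$ that $\sum_d\pi_d=I$, $\Phi_d(E)\pi_d=0$, and $\pi_d\pi_{d'}=0$ for $d\ne d'$, so that the $\pi_d$ are the projections realizing the direct sum.

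\textbf{Anticipated main obstacle.} The real content sits in the directness step: it is essentially the Chinese Remainder Theorem for $\mathbb{Q}[x]$ applied to the pairwise coprime factors $\Phi_d$, whereas the sum decomposition is almost immediate from the Lemma. Beyond that, two small points need a careful word: that $\mathbb{P}_n$ is genuinely all of $\ker(E^n-I)$ — i.e. we rely on the set-builder definition~\eqref{eq:pperiodic} and do not require the period to be \emph{exactly} $n$ — and that restricting to real-valued functions is harmless, since the $\Phi_d$ have integer coefficients and the Bézout coefficients may be chosen rational, even though the eigenfunctions $\lambda^x$ behind the decomposition (with $\lambda$ a root of unity) are complex.
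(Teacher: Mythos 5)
Your proposal is correct, and its first half follows the paper's own route: the paper likewise identifies $\mathbb{P}_n=\ker(E^n-I)$, substitutes $E$ into the factorization $x^n-1=\prod_{d\mid n}\Phi_d(x)$, and applies Lemma~\ref{eq:kernelDecompostion} inductively (pairwise coprimality of the distinct cyclotomic factors) to obtain $\mathbb{P}_n=\sum_{d\mid n}\ker(\Phi_d(E))$. Where you genuinely diverge is the directness step, and there your argument is both different and stronger. The paper only checks that $\ker(\Phi_d(E))\cap\ker(\Phi_{d'}(E))=\{0\}$ for $d\neq d'$, asserting this from the fact that $\Phi_d$ and $\Phi_{d'}$ have no common roots; but pairwise trivial intersection does not in general imply that a sum of more than two subspaces is direct, and in an infinite-dimensional function space the passage from ``no common roots'' to ``trivial kernel intersection'' itself needs justification. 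Your B\'ezout/Chinese-Remainder argument supplies exactly what is missing: from $a\Phi_d+b\prod_{e\neq d}\Phi_e=1$ in $\mathbb{Q}[x]$ you show that each $\ker(\Phi_d(E))$ meets the sum of all the \emph{other} summands trivially, which is the correct criterion for directness, and your alternative construction of the projections $\pi_d$ makes the decomposition completely explicit. So your proof buys a rigorous and self-contained directness argument where the paper's is incomplete, at the modest cost of invoking coprimality in $\mathbb{Q}[x]$ explicitly; your closing remarks (that $\mathbb{P}_n$ as defined in~\eqref{eq:pperiodic} is exactly $\ker(E^n-I)$ without insisting on fundamental period, and that rational B\'ezout coefficients keep everything within real-valued functions) address points the paper passes over silently.
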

\begin{proof}
  The proof follows from (\ref{eq:factorizationofxtonminus1}) and  Lemma \ref{eq:kernelDecompostion} inductively, as $\Phi_d(E)$  and $ \Phi_{d'}(E) $  have no common nontrivial factor for  $d'\neq d,\,    d',d | n $. So that we have
  $$ \mathbb{P}_n = \sum \limits_{d|n} \ker(\Phi_d(E)) $$
The remaining property to prove is that
$$\ker(\Phi_d(E))\cap \ker(\Phi_{d'}(E)) =\{ 0 \}, d,d'|n, d \neq d' . $$
This follows from the distinctness of the roots of the polynomial $x^n-1$ and its factors $\Phi_{d}(x) $ and $\Phi_{d'}(x) $  have no common roots. This guarantees that the intersections of the kernels of the operators $\Phi_d(E)$ and $\Phi_{d'}(E)$ is  $ \{ 0 \}$.
\end{proof}

\begin{corollary}
   Consider the linear $n$-th order difference equation $y(x+n)-y(x)=(E^n-I)y(x)=0$. If $y =f(x)$ is the solution of the difference equation, then $f(x)$ is of the form
  $$ f(x)= \sum_{d|n}f_d(x),\, f_d \in \ker(\Phi_d(E)) $$
  \end{corollary}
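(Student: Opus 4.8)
The plan is to recognize that this corollary is an immediate consequence of Theorem~\ref{eq:kerneldecomposition}, once the solution space of the stated difference equation is identified with $\mathbb{P}_n$. First I would observe that a function $y=f(x)$ satisfies $y(x+n)-y(x)=0$ for all $x\in\mathbb{R}$ precisely when $E^nf=f$, that is, $f\in\ker(E^n-I)$; and by the defining relation (\ref{eq:pperiodic}) this kernel is exactly $\mathbb{P}_n$. Hence the solution space of $(E^n-I)y=0$ is $\mathbb{P}_n$.

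Next I would use the factorization (\ref{eq:factorizationofxtonminus1}), namely $x^n-1=\prod_{d|n}\Phi_d(x)$, evaluated at the operator $E$, to write $E^n-I=\prod_{d|n}\Phi_d(E)$, and then apply Theorem~\ref{eq:kerneldecomposition} to conclude $\mathbb{P}_n=\bigoplus_{d|n}\ker(\Phi_d(E))$. Since this sum is direct, every $f\in\mathbb{P}_n$ admits a representation $f=\sum_{d|n}f_d$ with $f_d\in\ker(\Phi_d(E))$, which is precisely the asserted form, and moreover such a representation is unique. I would also add the remark that each summand $f_d$ is itself a solution of the original equation: since $\Phi_d(E)f_d=0$ and the factors $\Phi_{d'}(E)$ commute, one has $(E^n-I)f_d=\big(\prod_{d'|n,\,d'\neq d}\Phi_{d'}(E)\big)\Phi_d(E)f_d=0$.

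Because the corollary lies so close to the theorem it rests on, I do not expect a genuine obstacle. The only point that deserves a word of care is the identification of $\mathbb{P}_n$ — taken here, per (\ref{eq:pperiodic}), as the set of all $f$ with $f(x+n)=f(x)$, i.e.\ functions admitting $n$ as a period, not necessarily as the fundamental period — with the full solution space of $(E^n-I)y=0$; the uniqueness of the components $f_d$ then requires nothing new, as it is already built into the direct-sum statement of Theorem~\ref{eq:kerneldecomposition}.
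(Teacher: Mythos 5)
Your proposal is correct and matches the paper's (implicit) derivation: the corollary is stated as an immediate consequence of Theorem \ref{eq:kerneldecomposition}, with the solution space of $(E^n-I)y=0$ identified with $\mathbb{P}_n$ and the factorization (\ref{eq:factorizationofxtonminus1}) applied to the shift operator. Your added remarks on uniqueness of the components and on each $f_d$ solving the original equation are sound and only strengthen the statement.
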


\begin{example}
  Note that $\ker \Phi_1(E) =\mathbb{P}_1 $, $\ker \Phi_2(E) =\mathbb{AP}_1 $, $\ker \Phi_2(E) =\mathbb{AP}_2 $,  $\ker \Phi_8 (E)=\mathbb{AP}_4 $
\end{example}

\begin{example}
  We have the following decomposition the space $\mathbb{P}_{12} $ of periodic functions of period equal to $12$.
Consider the factorization
\begin{align*}
  x^{12}-1 &= (x-1)(x+1)(x^2+x+1)(x^2+1)(x^2-x+1)(x^4-x^2+1) \\
   & = \Phi_1(x)\Phi_2(x)\Phi_3(x)\Phi_4(x)\Phi_6(x)\Phi_{12}(x)
\end{align*}
Then by Theorem \ref{eq:kerneldecomposition}
  \begin{align*}
  \mathbb{P}_{12} &= \ker (\Phi_1(E))\oplus  \ker(\Phi_2(E)) \oplus  \ker(\Phi_3(E)) \oplus \ker(\Phi_4(E)) \oplus \ker(\Phi_6(E)) \oplus \ker (\Phi_{12}(E)) \\
   & = \mathbb{P}_1 \oplus \mathbb{AP}_1 \oplus \ker(\Phi_3(E)) \oplus  \mathbb{AP}_2 \oplus \ker(\Phi_6(E)) \oplus \ker(\Phi_{12}(E)).
\end{align*}
See \cite{HBY}.
\end{example}

\begin{example}
  Find a linear difference equation of lowest order that has the function $y(x)=\cos \pi x + \sin (2 \pi x/3 )$ as a solution. Let $ f_1(x)= \cos \pi x $, and $ f_2(x)= \sin (2 \pi x/3) $. Then we observe that $f_1\in \ker \Phi_2(E)= \mathbb{AP}_2 $,  $f_2 \in \ker \Phi_3(E) $. Since $\Phi_2(E) $ and $\Phi_3(E) $ have no common factor, the required difference equation is
  $$ \Phi_2(E)\Phi_3(E)y(x)= (E^3+2E^2+2E+I)y(x)=0  $$
\end{example}

\subsection{Circulant Matrices and difference Equations}
\begin{definition}
  A circulant matrix $C(a_0,a_1,...,a_{n -1})$ is an  $n \times n $  matrix of the form
\begin{equation}\label{eq:circulantsystem}
 \begin{bmatrix}
   a_{0} & a_{1} & \cdots & a_{n-1} \\
   a_{n-1} & a_{0} & \cdots & a_{n-2} \\
   \vdots  & \vdots  & \ddots & \vdots  \\
   a_{1} & a_{2} & \cdots & a_{0}
 \end{bmatrix},
\end{equation}
where $a_0,a_1,...,a_{n-1} \in \mathbb{Q} $. It is said to be unital if $a_0,a_1,...,a_{n-1} \in \{0,1\} $. See \cite{ZC}.
\end{definition}
The determinant of a circulant matrix    $C(a_0,a_1,...,a_{n -1})$ is calculated as
  $$ \det C(a_0,a_1,...,a_{n -1})=\prod_{j=0}^{n-1}( a_0+a_1\omega_j + ...+a_{n-1}\omega_j^{n-1}),$$
where
$$   \omega_j = e ^{\frac{2 \pi j}{n}},\quad  i^2 =-1 .$$
See \cite{ZC} and the references cited therein.
\begin{definition}
  The polynomial
  $$ f(x)=  a_0+a_1x + a_2x^2+...+a_{n-1}x^{n-1} \in \mathbb{Q}[x]$$
  is called \emph{associated polynomial} of  $C(a_0,a_1,...,a_{n -1})$. It is said to be unital if $a_0,a_1,...,a_{n-1} \in \{0,1\} $. See \cite{ZC}.
\end{definition}
\begin{theorem}
  Let $P(x)\in \mathbb{Q}[x]$. If $P$ is an associated polynomial of some singular circulant matrix, then the linear difference equation $P(E)y(x)=0 $ has a periodic solution.
\end{theorem}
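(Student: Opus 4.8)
The plan is to read off everything from the circulant determinant identity recalled just above and then feed the result into the first theorem of this section. Since $P$ is by hypothesis the associated polynomial of a circulant matrix $C(a_0,\dots,a_{n-1})$, the displayed formula gives $\det C(a_0,\dots,a_{n-1}) = \prod_{j=0}^{n-1}\bigl(a_0 + a_1\omega_j + \cdots + a_{n-1}\omega_j^{n-1}\bigr) = \prod_{j=0}^{n-1} P(\omega_j)$, where $\omega_j = e^{2\pi i j/n}$. The matrix being singular means this product vanishes, so there is an index $j_0$ with $P(\omega_{j_0}) = 0$. Thus $\lambda := \omega_{j_0}$ is at once a root of $P$ and an $n$-th root of unity.

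Next I would invoke the factorization $x^n - 1 = \prod_{d\mid n}\Phi_d(x)$ from (\ref{eq:factorizationofxtonminus1}): since $\lambda^n = 1$, $\lambda$ is a root of $\Phi_d$ for the divisor $d\mid n$ equal to the multiplicative order of $\lambda$. Hence $\lambda$ is a root of $P$ that is also a root of a cyclotomic polynomial, so the hypothesis of the first theorem of Section~2 (the sufficient condition for the existence of a periodic solution of integer period) is met, and we conclude at once that $P(E)y(x)=0$ has a periodic solution of integer period; explicitly $y(x) = \lambda^x$ works, because $y(x+n) = \lambda^n\lambda^x = \lambda^x$ and $P(E)\lambda^x = P(\lambda)\lambda^x = 0$ by (\ref{polynomialofshiftonexponential}).

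The only point requiring a word of care is that the ambient space $\mathcal{F}$ in this paper consists of real-valued functions, whereas $\lambda^x$ is in general complex. Since $P\in\mathbb{Q}[x]$ has real coefficients, $\overline{\lambda} = \omega_{n-j_0}$ is also a root, and writing $\lambda = e^{i\theta}$ with $\theta = 2\pi j_0/n$ the real-valued functions $\cos(\theta x)$ and $\sin(\theta x)$ both lie in $\ker P(E)$; at least one of them is not identically zero (if $\theta = 0$ use the constant function $1$, if $\theta = \pi$ use $\cos \pi x$), and both are $n$-periodic since $\theta n\in 2\pi\mathbb{Z}$. This yields a genuine nontrivial real periodic solution.

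I do not expect a real obstacle: the argument is just the chaining of the circulant-determinant identity, the cyclotomic factorization of $x^n-1$, and the already-proved sufficient-condition theorem, with the real/complex bookkeeping costing one sentence. If a sharper statement is wanted, one can further observe via Theorem~\ref{eq:kerneldecomposition} that the periodic solution sits inside the summand $\ker(\Phi_d(E))$ with $d$ the order of $\omega_{j_0}$, which also makes transparent that the solution has integer period dividing $n$.
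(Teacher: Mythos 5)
Your argument is essentially the paper's own: the paper's proof consists precisely of the circulant determinant factorization and the observation that singularity forces $P(\omega_{j_0})=0$, with the appeal to the earlier sufficient-condition theorem left implicit. Your version is correct and simply completes that chain explicitly (identifying the cyclotomic polynomial via the order of $\omega_{j_0}$ and handling the real-valuedness of the solution), which the paper omits.
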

\begin{proof}
  \begin{equation*}
   \det C(a_0,a_1,...,a_{n -1})=\prod_{j=0}^{n-1}f(\omega_j)
  \end{equation*}
  Then $   \det C(a_0,a_1,...,a_{n -1})=0 $ implies that $f(\omega_j)=0 $ for some $j=0,1,...,n-1$.
\end{proof}

\begin{example}
Let $p(x) = x-1 = \Phi_1(x)$. $ \Phi_1$ is an associated polynomial of a singular circulant matrix, $$ A= \begin{bmatrix}
                                -1 & 1\\
                                1 & -1
                              \end{bmatrix}.$$
\end{example}

\begin{theorem}\label{eq:lineardependence}
  Any periodic function $y$ of  fundamental  period $n \in \mathbb{N} $ is a solution of some difference equation
  $$ (a_0I+a_1E+ a_2E^2+...+a_{n-1}E^{n-1})y(x)=0  $$
\end{theorem}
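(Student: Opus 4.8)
The plan is to read the conclusion as a statement about linear dependence. Writing $f(x)=a_0+a_1x+\cdots+a_{n-1}x^{n-1}$ for the associated polynomial of the circulant $C(a_0,\dots,a_{n-1})$, the asserted difference equation is exactly $f(E)y=0$, that is $\sum_{k=0}^{n-1}a_k\,E^k y=0$. Producing coefficients $a_0,\dots,a_{n-1}$ not all zero is therefore the same as showing that the $n$ shifted functions $y,\,Ey,\,E^2y,\dots,E^{n-1}y$ are linearly dependent in $\mathbb{P}_n$. Because the cyclotomic factors below have rational coefficients, a dependence can be taken with rational (hence real) $a_k$, so the whole theorem reduces to a linear-dependence count for these $n$ functions.

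First I would use periodicity: since $y$ has period $n$ we have $E^n y=y$, so the monic polynomial $m_y$ of least degree with $m_y(E)y=0$ (the minimal polynomial of $E$ on the cyclic subspace generated by $y$) divides $x^n-1$. By the factorization (\ref{eq:factorizationofxtonminus1}) and Theorem \ref{eq:kerneldecomposition} I would decompose $y=\sum_{d\mid n}y_d$ with $y_d\in\ker(\Phi_d(E))$, and record that, because the $\Phi_d$ are distinct irreducible factors with no common nontrivial factor (Lemma \ref{eq:kernelDecompostion}), $m_y=\prod_{d\in S}\Phi_d$ where $S=\{\,d\mid n:\ y_d\neq 0\,\}$. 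The fundamental period of $y$ is then $\operatorname{lcm}(S)$, which equals $n$ by hypothesis.

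Next I would invoke the standard fact that the cyclic subspace $\mathrm{span}\{E^k y:k\ge 0\}$ has dimension $\deg m_y$, with $\{y,Ey,\dots,E^{\deg m_y-1}y\}$ a basis. Hence $y,Ey,\dots,E^{n-1}y$ are linearly dependent precisely when $\deg m_y\le n-1$, and any nontrivial dependence supplies the desired $f$. Since $\deg m_y=\sum_{d\in S}\phi(d)$ while $\sum_{d\mid n}\phi(d)=n$, this amounts to $\sum_{d\mid n,\ d\notin S}\phi(d)\ge 1$, i.e. to at least one cyclotomic component of $y$ being absent.

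Establishing this strict inequality is the main obstacle, and it is the delicate point of the whole argument: the condition $\operatorname{lcm}(S)=n$ fixes the period but does not by itself prevent $S=\{d:d\mid n\}$, in which case $\deg m_y=\sum_{d\mid n}\phi(d)=n$ and no annihilator of degree $\le n-1$ exists. I would therefore first settle the clean case $y\in\ker(\Phi_n(E))$, where $m_y\mid\Phi_n$ forces $\deg m_y\le\phi(n)\le n-1$ for $n\ge 2$, so the $n$ shifts are automatically dependent. For a general periodic $y$ I would then isolate, as the decisive hypothesis, the requirement that $y$ fail to have a nonzero component in at least one $\ker(\Phi_d(E))$, equivalently $m_y\neq x^n-1$; verifying that this holds for the functions under consideration is precisely the heart of the matter, since it is exactly the condition under which the reduction from an order-$n$ relation to an order-$(n-1)$ one is possible.
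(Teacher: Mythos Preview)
Your analysis is considerably more careful than the paper's own proof, which simply asserts the conclusion: after noting $E^ny=y$ and that the distinct shifts among $E^my$ are $y,Ey,\dots,E^{n-1}y$, the paper writes ``Due to linear dependence we have $a_0y+a_1Ey+\cdots+a_{n-1}E^{n-1}y=0$ for appropriate selections of the coefficients $a_0,a_1,\dots,a_{n-1}$ not all zero.'' No justification for this linear dependence is offered.

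You have correctly located the genuine obstruction: if $y$ has a nonzero component in \emph{every} summand $\ker(\Phi_d(E))$, $d\mid n$, then the minimal polynomial of $E$ on the cyclic subspace generated by $y$ is all of $x^n-1$, so $y,Ey,\dots,E^{n-1}y$ are linearly \emph{independent} and no nontrivial relation of order $\le n-1$ exists. This is not a hypothetical worry but an actual counterexample to the theorem as stated. For $n=2$ take $y(x)=1+\cos\pi x$: it has fundamental period $2$, yet $a_0y(x)+a_1y(x+1)=(a_0+a_1)+(a_0-a_1)\cos\pi x$ vanishes identically only when $a_0=a_1=0$. The same phenomenon occurs for any $n$ whenever $y$ is a sum of one nonzero function from each $\ker(\Phi_d(E))$, $d\mid n$.

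So your proposal does not have a fixable gap; rather, you have uncovered that the statement itself is false in the nontrivial sense intended (coefficients not all zero), and the paper's one-line ``proof'' conceals exactly the issue you isolated. The honest conclusion is that the theorem needs an extra hypothesis---for instance that $y\in\ker(\Phi_n(E))$, or more generally that some cyclotomic component of $y$ vanishes---which is precisely the condition you singled out as ``the heart of the matter.''
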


\begin{proof}
  Since $y \in \mathbb{P}_n $, we have $E^n y =y $. For any $ m \in\mathbb{ N} $, by division algorithm, $m=nq+r, 0 \leq r < n $ for some $ q, $, so that
  $$ E^my= E^{nq+r}y = E^r (E^{nq} y)=E^ry.    $$
  Therefore the distinct elements are
  $$ y=Iy,\quad Ey,\quad E^2y,\quad ...\quad ,E^{n-1}y.  $$
  Due to linear dependence we have
  $$a_0y + a_1E y + a_2E^2y +...+a_{n-1}E^{n-1}y=0,   $$
  for appropriate selections of the coefficients $a_0, a_1,...,a_{n-1} $ not all zero.
\end{proof}
\begin{theorem}
  Let $y$ be a periodic function of integer fundamental period $n \in \mathbb{N} $. The the coefficients $a_0, a_1,...,a_{n-1} $ in Theorem \ref{eq:lineardependence} are determined by the homogeneous system of equation
  $$ M x=0 ,$$
  where $ M $ is the $n \times n $ circulate matrix given in (\ref{eq:circulantsystem}) and $x$ is the column vector of unknowns $x=(a_0,\, a_1,\, ...,\,a_{n-1})^T $.
  \begin{proof}
    Let $y \in \mathbb{P}_n  $. By Theorem (\ref{eq:lineardependence}), we have
    \begin{equation}\label{eq:firstequation}
     a_0y + a_1E y + a_2E^2y +...+a_{n-1}E^{n-1}y =0,
    \end{equation}
  Now applying the shift operator to the equation in (\ref{eq:firstequation})and tanking into periodic of $y$ of period $n$ we get
  \begin{equation}\label{eq:secondequation}
    a_{n-1}y+a_1E^2y+...+a_{n-2}E^{n-1}y=0.
  \end{equation}
  Applying  the shift operators $E, E^2, E^3, ...,E^{n-1} $ to (\ref{eq:firstequation}) and taking the linear equations thus formed as well as the original equation (\ref{eq:firstequation}), we get the desired result.
  \end{proof}
\end{theorem}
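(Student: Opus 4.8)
The plan is to turn the single functional identity supplied by Theorem~\ref{eq:lineardependence} into a square homogeneous linear system, by applying shifts and then sampling at a point, and to recognise the resulting coefficient matrix as a circulant matrix of the shape \eqref{eq:circulantsystem}. By Theorem~\ref{eq:lineardependence} there is a nonzero vector $x=(a_0,a_1,\dots,a_{n-1})^{T}$ with
$$ a_0y+a_1Ey+\cdots+a_{n-1}E^{n-1}y=0 $$
holding identically on $\mathbb{R}$. First I would apply the operators $E^{-1},\dots,E^{-(n-1)}$ to this identity --- equivalently, since $E^{n}y=y$ forces $E^{-j}y=E^{n-j}y$ on $\mathbb{P}_n$, the forward shifts $E^{n-1},\dots,E^{1}$ --- obtaining $n-1$ further identities. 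In the one coming from $E^{-j}$ the summand $a_iE^{i-j}y$ appears; periodicity reduces the exponent modulo $n$, so this identity, too, involves only $y,Ey,\dots,E^{n-1}y$, and after collecting terms by the power of $E$ it reads $\sum_{m=0}^{n-1}a_{(m+j)\bmod n}E^{m}y=0$.

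Next I would fix an arbitrary $t\in\mathbb{R}$ and set $u_k:=y(t+k)$, so that $u_{k+n}=u_k$. Evaluating all $n$ identities at $t$ produces the homogeneous system $Mx=0$ in which the coefficient of $a_i$ in the $j$-th equation equals $u_{(i-j)\bmod n}$; that is, $M$ is precisely a matrix of the shape \eqref{eq:circulantsystem} with the entries $a_\ell$ there replaced by the consecutive values $u_\ell=y(t+\ell)$ of $y$. Using the forward shifts instead merely permutes the rows of $M$ and leaves its null space unchanged. Thus every coefficient vector admissible in Theorem~\ref{eq:lineardependence} lies in $\ker M$, and, since such a nonzero vector exists, $M$ is singular, so the system genuinely carries the nontrivial solutions that provide the $a_i$. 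In the other direction, letting $t$ vary over $\mathbb{R}$, the $j=0$ row of $M(t)x=0$ is exactly the functional identity $\sum a_iE^iy\equiv 0$, while each remaining row is that same identity evaluated at the shifted base point $t-j$ and so adds nothing; this is the sense in which the admissible coefficients are determined by $Mx=0$.

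The step I expect to demand the most care is the indexing that matches the coefficient matrix with \eqref{eq:circulantsystem}: one must keep the direction of the cyclic shift fixed (the left- and right-shift conventions differ by reversing the column order, which is why backward and forward shifts give row-permuted but equivalent systems) and bear in mind that the entries of $M$ are the sampled values $y(t+\ell)$, not the unknowns $a_\ell$ that sit in \eqref{eq:circulantsystem}. I would also add a remark that $Mx=0$ fixes the $a_i$ only up to the solution space of the system, which is nonzero by Theorem~\ref{eq:lineardependence} but can have dimension larger than $1$ --- for instance $y(x)=\cos(2\pi x/n)$ is annihilated by an $(n-2)$-dimensional space of polynomials of degree below $n$ --- so isolating a difference equation of least order would need the additional input that $n$ is the fundamental period of $y$.
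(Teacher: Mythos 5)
Your proposal is correct and follows essentially the same route as the paper: apply the shift operators to the single identity from Theorem~\ref{eq:lineardependence}, invoke $E^{n}y=y$ to reduce exponents modulo $n$, and read the resulting $n$ equations as the circulant system $Mx=0$ of the shape \eqref{eq:circulantsystem} (your use of backward shifts only permutes the rows, as you note). Your additional care with the indexing, the evaluation at a base point $t$ so the entries of $M$ are the sampled values $y(t+\ell)$ rather than the unknowns, and the remark that $Mx=0$ determines the $a_i$ only up to the (necessarily nontrivial) null space are refinements the paper's terse proof leaves implicit.
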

\section{Conclusions}
In this paper, we have discussed some connections with shift operators, periodicity, difference equations, cyclotomic polynomials and roots of unity. The main results of the paper includes the result on the  kernels of a product of two operators with no nontrivial factors, the decomposition of space $\mathbb{P}_n$ of periodic functions of integer period $n$ into the direct some of the kernels of the $\Phi_d(E)$ the cyclotomic polynomials in the shift operator $E$. It is not that not all linear difference equations have periodic solutions. The paper included the  sufficient condition that a difference equation can have a periodic solution of integer period, and arbitrary period. Difference equations with integer period $n$ have some connections with  $n$th roots of unity and cyclotomic polynomials $\Phi_d,\, d|n $. Difference equations are connected with in some way with periodic function that  even solution of the equations  with no periodic solutions are linear combinations of some functions over arbitrary periodic functions. Circulant matrices are also important in the study of difference equations .

\section*{Conflict of interests}
The author declare that there is no conflict of interests regarding the publication of this paper.

\section*{Acknowledgment}
The author is thankful to the anonymous reviewers for their constructive and valuable suggestions.

\section*{Data availability}
The are no external data used in this paper other that the reference materials used here.

\end{document}